\newtheorem{theorem}{Theorem}[section]
\newtheorem{remark}{Remark}[section]
\newtheorem{corollary}{Corollary}[section]
\title[Best approximation-preserving operators \ldots]{Best approximation-preserving operators over Hardy space}
\author{F.G. Abdullayev}
\address{Mersin University, Mersin, Turkey}
\email{fabdul@mersin.edu.tr}
\author{V.V. Savchuk}
\address{Institute of Mathematics of NAS of Ukraine, Kyiv, Ukraine,}
\email{savchuk@imath.kiev.ua}
\author{M.V. Savchuk}
\address{NTU of Ukraine “Igor Sikorsky Kyiv Polytechnical Institute”, Kyiv, Ukraine}
\email{ma.savchuk@kpi.ua}
\keywords{Hardy space, best approximation, Hadamard product, Cauchy inequality, Landau inequality}
\subjclass[2010]{30A42, 30C50, 30H05, 30H10, 41A44}
\begin{document}

\begin{abstract}
Let $T_n$ be the linear Hadamard convolution operator acting over Hardy space $H^q$, $1\le q\le\infty$. We call $T_n$ a best approximation-preserving operator (BAP operator) if $T_n(e_n)=e_n$, where $e_n(z):=z^n,$ and if $\|T_n(f)\|_q\le E_n(f)_q$ for all $f\in H^q$, where $E_n(f)_q$ is the best approximation by algebraic polynomials of degree a most $n-1$ in $H^q$ space.

We give necessary and sufficient conditions for $T_n$ to be a BAP operator over $H^\infty$. We apply this result to establish an exact lower bound for the best approximation of bounded holomorphic functions. In particular, we show that the Landau-type inequality $\left|\widehat f_n\right|+c\left|\widehat f_N\right|\le E_n(f)_\infty$, where $c>0$ and $n<N$,  holds for every $f\in H^\infty$ iff $c\le\frac{1}{2}$ and $N\ge 2n+1$.
\end{abstract}

\maketitle

\section{Introduction} \label{s1}

Let $\mathbb D:=\{z\in\mathbb C : |z|<1\}$, $\mathbb T:=\{z\in\mathbb C : |z|=1\}$ and let $dm$ be a normalized Lebesgue measure on $\mathbb T$. The Hardy space $H^q$ for $1\le q\le\infty$ is the class of holomorphic in the $\mathbb D$ functions $f$ satisfied $\|f\|_q<\infty$, where
\[
\|f\|_q:=
\begin{cases}
\displaystyle\sup_{\rho\in(0,1)}\left(\int_\mathbb T|f(\rho t)|^qdm(t)\right)^{1/q}\hfill&\mbox{if}~1\le q<\infty,\cr
\displaystyle\sup_{z\in\mathbb D}|f(z)|\hfill&\mbox{if}~q=\infty.
\end{cases}
\]

It is well known, that for each function $f\in H^1$, the nontangential limit $f(t)$, $t\in\mathbb T,$ exist almost everywhere on $\mathbb T$ and $t\mapsto f(t)\in L^1(\mathbb T)$.

The best polynomial approximation of $f\in H^q$ is the quantity
\[ 
E_n(f)_q:=
\begin{cases}
\|f\|_q\hfill&\mbox{if}~n=0,\cr
\inf_{P_{n-1}\in\mathcal P_{n-1}}\|f-P_{n-1}\|_q\hfill&\mbox{if}~ n\in\mathbb N,
\end{cases}
\]
where $\mathcal P_{n-1}$ is the set of all algebraic polynomials of degree at most $n-1$.

Let $\{T_n\}_{n=0}^\infty$ be the sequence of bounded linear operators acting form $H^q$ into $H^q$. We call $T_n$ a \textit{best approximation-preserving operator} (BAP operator) if $T_n(e_n)=e_n$, where $e_n(z):=z^n,$ and if $\|T_n(f)\|_q\le E_n(f)_q$ for all $f\in H^q$. In case $n=0$ the operator $T_0$ is called a bound-preserving over $H^q$ [1], [2].

Clearly, if $T_n$ is a BAP operator and if $n\ge 1$, $T_n(e_k)=0$ for $k=0,1,\ldots,n-1.$ In addition, $E_n(f)_q\le\|f\|_q,$ $\forall f\in H^q.$ Thus, $T_n$ annihilates the set $\mathcal P_{n-1}$ and $\|T_n\|_{H^q\rightarrow H^q}:=\sup\{\|T_n(f)\|_q : \|f\|_q\le 1\}=1$. 

Further, we consider only the operator $T_n$ defined by Hadamard products. 

Recall that a Hadamard product of two functions $f(z)=\sum_{k=0}^{\infty}\widehat f_kz^k$ and $g(z)=\sum_{k=0}^{\infty}\widehat g_kz^k$ holomorphic in $\mathbb D$ is the function $(f*g)(z)=\sum_{k=0}^{\infty}\widehat f_k\widehat g_kz^k$, also holomorphic in $\mathbb D$. Here we denote $\widehat f_k:=f^{(k)}(0)/k!$. 
The Hadamard product has the integral representation
\[
(f*g)(z)=\int_\mathbb T f(\rho t)g\left(\frac{z}{\rho t}\right)dm(t),
\]
where $|z|<\rho<1$. If $f\in H^1$, the last formula is valid for $\rho=1$.

So, we will consider a BAP operators $T_n$ given in the forms
\[
T_n(f)=K_n*f,\quad n\in\mathbb Z_+,
\]
where a function $K_n$ is holomorphic in $\mathbb D$ and is called a kernel associated with $T_n$. 

The main reason why BAP operators are of special interest is that for a given $f\in H^q$ the convolution norm $\|K_n*f\|_q$, for a suitable $K_n$, turns out to be a sharp lower bound for the best approximation $E_n(f)_q$. For example, it was shown in [3] and [4] that the operator $T_n=K_n*$, where
\[
K_n(z)=\sum_{j=0}^{\infty}z^{jN+n}=\frac{z^n}{1-z^{N}},~n\in\mathbb Z_+,~N\in\mathbb N,
\]
is a BAP operator over $H^\infty$ if and only if $N\ge n+1$, and, moreover, for the function $f(z)=\frac{1}{1-\rho z}$, $0<\rho<1$, there holds
\[
\|T_n(f)\|_1=E_n(f)_1=\frac{2}{\pi}\rho^n{\bf K}(\rho^{n+1}),~n\in\mathbb Z_+,
\]
where 
\[
{\bf K}(x)=\int_0^{\frac{\pi}{2}}\frac{d\theta}{\sqrt{1-x^2\sin^2\theta}}
\] 
is the complete elliptic integral of the first kind. 

In view of this the main question is: \textit{what conditions on $K_n$ are necessary and sufficient for $T_n$ to be a BAP operator?}

The problem is solved only in case $n=0$. Namely, as was shown by Goluzin [5, pp. 515, 516], 
\textit{in order for $T_0$ to be a bound-preserving operator over $H^\infty$ i.e. $\|K_0*f\|_\infty\le\|f\|_\infty$, $\forall f\in H^\infty,$ it is necessary and sufficient that $2\mathrm{Re}K_0(z)\ge 1$ for all $z\in\mathbb D$.}

In this paper, we give a solution of the problem in general case.

The paper is organized as follows: In Sec.2, we give main results, which consist of two theorems. The first one gives a criterion for $T_n=K_n*$ to be a BAP operator over $H^\infty$. This criterion also implies that $T_n$ is BAP operator over $H^q$ for all $q\ge 1$. The second one, a slight refinement of previous, gives the criterion for validity of the estimate $|T_n(f)(z)|+|(L_n*f)(z)|\le E_n(f)_\infty$, where $L_n$ is a function holomorphic in $\mathbb D$ with $L_n(z)=O(z^n)$  as $z\to 0$.

In Sec.3, we concentrate on applications of main results to lower estimates for the best approximation of holomorphic functions from $H^\infty$ in terms of its Taylor coefficients. 

\section{Main results}

\begin{theorem}\label{Th_Sav-1}
Let $n\in\mathbb Z_+$, $K_n$ be a function holomorphic in $\mathbb D$, $K_n(z)=z^n+O(z^{n+1})$ as $z\to 0$ and let $T_n=K_n*$ be an operator defined as above. Then $T_n$ is a BAP operator over $H^\infty$ if and only if  
\begin{equation}\label{positiv kern}
\begin{cases}
K_n(z)=z^n+O(z^{2n+1})~\mbox{as}~z\to 0,\\
\displaystyle\mathrm{Re}\frac{K_n(z)}{z^n}\ge\frac{1}{2}~\mbox{for all}~z\in\mathbb D.
\end{cases}
\end{equation}

Moreover, (\ref{positiv kern}) implies that $T_n$ is a BAP operator over $H^q$ space for  $q\ge 1$.
\end{theorem}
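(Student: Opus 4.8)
The plan is to prove the two implications separately, and in each case to reduce everything to the classical case $n=0$ treated by Goluzin.

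\medskip

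\noindent\textbf{Sufficiency.} Suppose \eqref{positiv kern} holds. Write $K_n(z)=z^n\widetilde K(z)$, where $\widetilde K(z)=K_n(z)/z^n$ is holomorphic in $\mathbb D$, $\widetilde K(z)=1+O(z^{n+1})$ as $z\to 0$, and $\operatorname{Re}\widetilde K(z)\ge 1/2$. By Goluzin's criterion, $T_0=\widetilde K*$ is a bound-preserving operator over $H^\infty$, i.e. $\|\widetilde K*g\|_\infty\le\|g\|_\infty$ for all $g\in H^\infty$. Now, given $f\in H^\infty$ and any $P_{n-1}\in\mathcal P_{n-1}$, I want to estimate $\|K_n*f\|_\infty$. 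The point is that for $g:=f-P_{n-1}$ one has $K_n*g=K_n*f$ (because $K_n$ kills monomials of degree $<n$; here the first line of \eqref{positiv kern} is used only to get the normalization $T_n(e_n)=e_n$, not for the bound). It remains to connect $K_n*g$ with $\widetilde K*(\text{something of sup-norm }\|g\|_\infty)$. Since $g(z)=\sum_{k\ge n}\widehat g_k z^k$, write $g(z)=z^n h(z)$ with $h$ holomorphic; but $\|h\|_\infty$ need not equal $\|g\|_\infty$, so I instead argue pointwise: for fixed $z=\rho\zeta$ with $\zeta\in\mathbb T$, use the integral representation of the Hadamard product to write $(K_n*f)(z)$ as an average of $f$ against a rotated kernel, and bound using $\operatorname{Re}(K_n(z)/z^n)\ge 1/2$ exactly as in Goluzin's argument (the factor $z^n$ is harmless inside the integral since $|z^n|\le 1$ and, more importantly, the extra vanishing order lets one subtract an arbitrary $P_{n-1}$ for free). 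This yields $\|K_n*f\|_\infty\le E_n(f)_\infty$. The normalization $T_n(e_n)=e_n$ is immediate from the first line of \eqref{positiv kern}.

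\medskip

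\noindent\textbf{Necessity.} Conversely, assume $T_n=K_n*$ is a BAP operator over $H^\infty$. From the general remarks already in the paper, $T_n$ annihilates $\mathcal P_{n-1}$ and $\|T_n\|_{H^\infty\to H^\infty}=1$ with $T_n(e_n)=e_n$. The second line of \eqref{positiv kern}, $\operatorname{Re}(K_n(z)/z^n)\ge 1/2$, should follow by testing $T_n$ on the functions $f_w(z)=z^n/(1-\bar w z)$ (or $z^n$ times a suitable conformal automorphism) and using $E_n(f_w)_\infty\le\|f_w\|_\infty$ together with the reproduction property, mimicking the reverse direction of Goluzin's theorem after dividing out $z^n$. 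The more delicate point is the \emph{first} line, $K_n(z)=z^n+O(z^{2n+1})$: I expect this to come from probing $T_n$ on functions of the form $f(z)=z^n+\lambda z^k$ for $n<k\le 2n$ and $\lambda\in\mathbb C$. For such $f$, $E_n(f)_\infty$ can be computed or estimated (for small $\lambda$ it is $1+o(\lambda)$ since the degree-$<n$ part is zero and the best approximation only removes lower-order terms), while $\|T_n(f)\|_\infty=\|e_n+\lambda\widehat K_k e_k\|_\infty$; choosing the phase of $\lambda$ to align the two terms at a boundary point forces $|1+\lambda\widehat K_k|\le E_n(f)_\infty=\|f\|_\infty$ for all small $\lambda$, and since $\|z^n+\lambda z^k\|_\infty=1+O(|\lambda|^2)$ as well (the two unimodular terms cannot reinforce to first order after optimizing... ), a first-order comparison in $\lambda$ forces $\widehat K_k=0$. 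Carrying this out for each $k=n+1,\dots,2n$ gives the required vanishing.

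\medskip

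\noindent\textbf{The $H^q$ statement for $q\ge1$.} Finally, assuming \eqref{positiv kern}, I must show $\|K_n*f\|_q\le E_n(f)_q$ for every $f\in H^q$, $1\le q\le\infty$. The standard device is: the operator $T_n=K_n*$ is given by convolution, hence commutes with rotations, and its action on $H^q$ is controlled by its action on $H^1$ and $H^\infty$ via the integral representation $(K_n*f)(z)=\int_{\mathbb T}f(\rho t)\,\overline{\big(\text{kernel}\big)}\,dm(t)$. Concretely, fix $f\in H^q$ and $P_{n-1}\in\mathcal P_{n-1}$; then $(K_n*f)(\rho\zeta)=((f-P_{n-1})*K_{n,\rho})(\zeta)$ for an appropriate dilated kernel, and Minkowski's integral inequality in $L^q(\mathbb T)$ reduces the $L^q$-bound to the $L^1$-mass of the kernel on $\mathbb T$, which is $\le 1$ precisely because of $\operatorname{Re}(K_n/z^n)\ge 1/2$ (this positivity is equivalent to the associated boundary kernel being a probability-type kernel, exactly as in Fejér–Riesz/Goluzin). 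Taking the infimum over $P_{n-1}$ gives the claim. The main obstacle, and the step I would spend the most care on, is the necessity of $K_n(z)=z^n+O(z^{2n+1})$: the sufficiency and the $H^q$ extension are essentially ``Goluzin after factoring out $z^n$,'' whereas the vanishing of the middle coefficients $\widehat K_{n+1},\dots,\widehat K_{2n}$ requires a genuinely sharp, quantitative choice of test functions and a careful second-order expansion of the sup-norm of a binomial $z^n+\lambda z^k$.
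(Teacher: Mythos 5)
Your overall plan (reduce to Goluzin after factoring out $z^n$, plus perturbative test functions for the vanishing of the middle coefficients) is genuinely different from the paper's proof, but as written it has two concrete gaps. First, in the sufficiency part your parenthetical claim that the first line of \eqref{positiv kern} ``is used only to get the normalization $T_n(e_n)=e_n$, not for the bound'' is false, and the assertion that $g=f-P_{n-1}$ satisfies $g(z)=\sum_{k\ge n}\widehat g_kz^k$ is also false: subtracting $P_{n-1}$ does not make the low-order Taylor coefficients vanish (and if you force them to vanish by taking the Taylor section, $\|g\|_\infty$ is no longer comparable to $E_n(f)_\infty$). Factoring out $z^n$ and applying Goluzin only bounds $\|K_n*f\|_\infty$ by the norm of the backward-shifted function, and the backward shift is not a contraction on $H^\infty$. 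The pointwise argument you retreat to is in substance the paper's identity $T_n(f)(z)=z^n\int_{\mathbb T}(f(t)-P(t))\,\overline t^{\,n}\bigl(2\mathrm{Re}\tfrac{K_n(\overline zt)}{(\overline zt)^n}-1\bigr)dm(t)$ followed by Minkowski (this also gives the $H^q$ case), but the very step of inserting an arbitrary $P\in\mathcal P_{n-1}$ is where the first line enters: the boundary function $t\mapsto\overline t^{\,n}\bigl(2\mathrm{Re}\tfrac{K_n(\overline zt)}{(\overline zt)^n}-1\bigr)$ has Fourier frequencies $-n$, $k-2n$ and $-k$ for those $k\ge n+1$ with $\widehat K_k\ne0$, and these avoid $\{0,-1,\dots,-(n-1)\}$ precisely when $\widehat K_{n+1}=\cdots=\widehat K_{2n}=0$. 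A concrete counterexample to your parenthetical: $K_1(z)=z+\tfrac12z^2$ satisfies $\mathrm{Re}(K_1(z)/z)\ge\tfrac12$ yet $T_1$ is not BAP, as the test $f=z+\lambda z^2$ below shows.

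Second, in the necessity part the reduction of $\mathrm{Re}(K_n/z^n)\ge\tfrac12$ to Goluzin is workable (test with $f=e_ng$, $\|g\|_\infty\le1$; then $T_n(e_ng)=e_n\cdot(\widetilde K*g)$ with $\widetilde K=K_n/e_n$, so BAP gives $\|\widetilde K*g\|_\infty\le\|g\|_\infty$ and Goluzin's necessity applies), though testing only the kernels $z^n/(1-\overline wz)$ would not obviously suffice. The genuine gap is the first line: your quantitative claims are wrong. One has $\|z^n+\lambda z^k\|_\infty=1+|\lambda|$ exactly, not $1+O(|\lambda|^2)$, and ``the best approximation only removes lower-order terms'' is the opposite of the needed fact; as stated, your comparison only yields $|\widehat K_k|\le1$. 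The step can be repaired, but only by exhibiting a second-order bound on $E_n$ itself: for $n<k\le2n$ the polynomial $P(z)=\overline\lambda z^{2n-k}\in\mathcal P_{n-1}$ gives on $\mathbb T$ the identity $|z^n+\lambda z^k-P(z)|=|1+2\mathrm i\,\mathrm{Im}(\lambda z^{k-n})|\le\sqrt{1+4|\lambda|^2}$, hence $E_n(z^n+\lambda z^k)_\infty\le1+2|\lambda|^2$, while $\|T_n(z^n+\lambda z^k)\|_\infty=1+|\lambda||\widehat K_k|$, so letting $\lambda\to0$ forces $\widehat K_k=0$; this bound exists only because $k\le2n$, and it is exactly the missing ingredient in your sketch. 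For comparison, the paper avoids test functions altogether: it studies the functional $\Phi_z(f)=T_n(f)(z)/z^n$, uses the Schwarz lemma to get $\|\Phi_z\|=1$ attained by $e_n$, and then the uniqueness of the extremal pair in the dual extremal problem, together with the Schwarz integral formula, identifies the dual function $g_z$ and yields both conditions of \eqref{positiv kern} simultaneously.
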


\begin{proof} As was noted above, the assertion is well-known for $n=0$.	
So, further in the proof we assume $n\ge 1$. 

Let us prove the necessity. 
First of all, we note that $|T_n(f)(z)|\le\|f\|_\infty$ for all $z\in\mathbb D$, and  that $(d/dz)^k(T_n(f)(0)=0$ for $k=0,1,\ldots,n-1.$ Therefore, by Schwarz's lemma, we have
\begin{equation}\label{Schwarz est}
|T_n(f)(z)|\le|z|^n,~\forall z\in\mathbb D,
\end{equation}
for any function $f\in H^\infty$ with $\|f\|_\infty\le 1$.
	
Now let us fix $z\in\mathbb D\setminus\{0\}$ and consider the functional 
\[
\Phi_z(f):=\frac{T_n(f)(z)}{z^n}.
\]
	
According to (\ref{Schwarz est}) we get that the norm of the functional $\Phi_z$ satisfies $\|\Phi_z\|\le1.$	
On the other side, for the function $e_n$ we have $\Phi_z(e_n)=1$. Therefore $\|\Phi_z\|=1$ for any $z\in\mathbb D\setminus\{0\}$.
	
Now, let us represent $\Phi_z$ in the integral form
\begin{equation}\label{int rep}
\Phi_z(f)=\int_\mathbb Tf(t)z^{-n}K_n(\overline tz)dm(t).
\end{equation}

It is known (see [6, p. 129]), that there exists unique (extremal) $f^*\in H^\infty$ with $\|f^*\|_\infty= 1$ and there exists unique function $g_z\in H^1_0:=\{g\in H^1 : g(0)=0\}$ such that
\begin{eqnarray*}
\|\Phi_z\|&=&\left|\int_\mathbb Tf^*(t)z^{-n}K_n(\overline tz)dm(t)\right|\\
&=&\int_\mathbb T\left|z^{-n}K_n(z\overline  t)+g_z(t)\right|dm(t)
\end{eqnarray*}
and
\begin{equation}\label{extremality}
f^*(t)\left(z^{-n}K_n(\overline tz)+g_z(t)\right)=\left|z^{-n}K_n(\overline tz)+g_z(t)\right|
\end{equation}
for a.e. $t\in\mathbb T$.

Let $K_n(z)=z^n+\sum_{k=0}^{\infty}\alpha_{k,n}z^k$ be a power series expansion for $K_n$. Since the function $f^*=e_n$ is extremal for $\Phi_z$, the equality (\ref{extremality}) implies the relation
\begin{equation}\label{positivity}
t^nz^{-n}\left(z^n\overline{t}^n+\sum_{k=n+1}^{\infty}\alpha_{k,n}z^k\overline t^{k}\right)+t^ng_z(t)\ge 0
\end{equation}
for a.e. $t\in\mathbb T$. This gives
\[
\mathrm{Im}(t^ng_z(t))=\mathrm{Im}\left(\sum_{k=n+1}^{\infty}\overline{\alpha_{k,n}z^{k-n}}t^{k-n}\right)
\]
for a.e. $t\in\mathbb T$.
	
Therefore, by Schwarz's integral formula we get
\begin{eqnarray*}
t^ng_z(t)&=&\mathrm i\int_\mathbb T\mathrm{Im}(w^ng_z(w))\frac{1+\overline wt}{1-\overline wt}dm(w)	\\
&=&\sum_{k=n+1}^{\infty}\overline{\alpha_{k,n}z^{k-n}}t^{k-n}
\end{eqnarray*}
for all $t\in\mathbb D$. Consequently,
\[
g_z(t)=\sum_{k=n+1}^{\infty}\overline{\alpha_{k,n}z^{k-n}}t^{k-2n},~t\in\mathbb D.
\]
But $g_z$ must be in $H^1_0$. Hence, it follows that $\alpha_{k,n}=0$ for $k=n+1,\ldots, 2n$, or, equivalently, the first relation in (\ref{positiv kern}). Moreover, from (\ref{positivity}) follows the second relation in (\ref{positiv kern}).

To complete the proof,  we show that (\ref{positiv kern}) implies $T_n$ is a BAP operator over $H^q$ for $1\le q\le\infty$. Using (\ref{int rep}) and the equality $\int_\mathbb Tf(t)t^kdm(t)=0$ for $k\in\mathbb N$,  we get the representation 
\begin{eqnarray}\label{int rep2}
T_n(f)(z)&=&z^n\int_\mathbb Tf(t)\overline t^n\frac{K_n(\overline tz)}{(\overline tz)^n}dm(t)\nonumber\\
&=&z^n\int_\mathbb T\left(f(t)-P(t)\right)\overline t^n\left(2\mathrm{Re}\frac{K_n(\overline z t)}{(\overline zt)^n}-1\right)dm(t),~z\in\mathbb D,
\end{eqnarray}
where $P$ ia an arbitrary polynomial from $\mathcal P_{n-1}$.
The result follows by estimating the integral by Minkowski's inequality.
\end{proof}

\begin{remark}\label{rem1}
By Herglotz's theorem (see [6, p.19]) the conditions (\ref{positiv kern}) are equivalent to that
\begin{equation}\label{C-S int}
K_n(z)=z^n\int_\mathbb T\frac{d\mu(t)}{1-\overline tz},~z\in\mathbb D,
\end{equation}

here $\mu$ is a positive Borel measure on $\mathbb T$ of total variation 1 satisfying
\begin{equation}\label{moment}
\int_\mathbb Tt^kd\mu(t)=0,~k\in\mathbb Z, 1\le|k|\le n.
\end{equation}
Consequence (it follows from (\ref{int rep2}) and (\ref{C-S int})) a BAP operator $T_n$ over $H^q$, $1\le q\le\infty$, has the representation
\[
T_n(f)(z)=\int_\mathbb Tf\left(\overline tz\right)t^nd\mu(t).
\]
\end{remark}
With respect to Theorem \ref{Th_Sav-1} and Remark \ref{rem1},  naturally arises the following question: \textit{how does the condition
\[
\inf_{z\in\mathbb D}\left(\mathrm{Re}\frac{K_n(z)}{z^n}-\frac{1}{2}\right)=\frac{1}{2}\inf_{z\in\mathbb D}\int_\mathbb T\frac{1-|z|^2}{|1-\overline tz|^2}d\mu(t)=a>0
\]
influence on sharpness of the estimate $\|T_n(f)\|_\infty\le E_n(f)_\infty$ for individual function?}

The answer to this question is the following:

\begin{theorem}\label{Th_Sav-2}
Let $n\in\mathbb Z_+$ and let $K_n$ and $L_n$ be a  holomorphic functions in $\mathbb D$, and $L_n(z)=O(z^n)$ as $z\to 0$. Then $T_n=K_n*$ is a BAP operator over $H^\infty$ and 
\begin{equation}\label{thm2 ineq}
\sup_{z\in\mathbb D}\left(\left|T_n(f)(z)\right|+\left|(L_n*f)(z)\right|\right)\le E_n(f)_\infty,~\forall f\in H^\infty,
\end{equation}
if and only if
\[
\begin{cases}
K_n(z)=z^n+O\left(z^{2n+1}\right)~\mbox{as}~z\to 0,\\
L_n(z)=O\left(z^{2n+1}\right)~\mbox{as}~z\to 0,\\
\displaystyle\mathrm{Re}\frac{K_n(z)}{z^n}-\frac{1}{2}\ge\left|\frac{L_n(z)}{z^n}\right|~\mbox{for all}~z\in\mathbb D.
\end{cases}
\]
\end{theorem}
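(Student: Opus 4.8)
The plan is to reduce this statement to Theorem \ref{Th_Sav-1} by a linearization trick: the combined inequality $|T_n(f)(z)|+|(L_n*f)(z)|\le E_n(f)_\infty$ should be recast as a family of ordinary BAP-type inequalities for a one-parameter family of kernels. Specifically, for each unimodular constant $\lambda\in\mathbb T$ consider the kernel $K_n^\lambda:=K_n+\lambda L_n$. Since $|a|+|b|=\sup_{\lambda\in\mathbb T}\mathrm{Re}(\lambda(a+\text{suitably rotated }b))$ — more precisely, for fixed $z$ one has $|T_n(f)(z)|+|(L_n*f)(z)|=\sup_{|\lambda|=|\nu|=1}\mathrm{Re}\bigl(\nu T_n(f)(z)+\lambda L_n*f(z)\bigr)$ after absorbing a global rotation — the inequality \eqref{thm2 ineq} holding for all $f$ is equivalent to $\|(K_n+\lambda L_n)*f\|_\infty\le E_n(f)_\infty$ for all $f\in H^\infty$ and all $\lambda\in\mathbb T$. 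One has to be a little careful since $K_n+\lambda L_n$ need not satisfy the normalization $K_n^\lambda(z)=z^n+O(z^{n+1})$ a priori, but the hypothesis $L_n(z)=O(z^n)$ together with the conclusion we are proving forces the right low-order behavior; I would first establish the necessity of $K_n(z)=z^n+O(z^{2n+1})$ and $L_n(z)=O(z^{2n+1})$ directly from \eqref{thm2 ineq}, arguing exactly as in the necessity part of Theorem \ref{Th_Sav-1} (test against $f=e_n$ and $f=e_k$ for $k<n$, and inspect the extremal $g_z\in H^1_0$), after which $K_n^\lambda$ has the normalization required to apply Theorem \ref{Th_Sav-1}.

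Granting that, the sufficiency direction runs as follows. Assume the three displayed conditions. Fix $\lambda\in\mathbb T$. The first two conditions give $K_n^\lambda(z)=z^n+O(z^{2n+1})$. For the real-part condition, note $\mathrm{Re}\dfrac{K_n^\lambda(z)}{z^n}=\mathrm{Re}\dfrac{K_n(z)}{z^n}+\mathrm{Re}\dfrac{\lambda L_n(z)}{z^n}\ge \mathrm{Re}\dfrac{K_n(z)}{z^n}-\left|\dfrac{L_n(z)}{z^n}\right|\ge\dfrac12$ by the third hypothesis, for every $z\in\mathbb D$ and every $\lambda\in\mathbb T$. Hence $K_n^\lambda$ satisfies \eqref{positiv kern}, so by Theorem \ref{Th_Sav-1} the operator $K_n^\lambda*$ is a BAP operator over $H^\infty$: $\|(K_n+\lambda L_n)*f\|_\infty\le E_n(f)_\infty$. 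Taking $\lambda=e^{i\theta}$ and, for fixed $z$, choosing $\theta$ so that $e^{i\theta}(L_n*f)(z)$ is the positive multiple of $\overline{T_n(f)(z)}$ — equivalently applying the supremum-over-$\lambda$ identity pointwise in $z$ — yields $|T_n(f)(z)|+|(L_n*f)(z)|\le E_n(f)_\infty$ for each $z$, and then take the supremum over $z$. (If one prefers to avoid the pointwise-$z$ subtlety, one can instead use the integral representation \eqref{int rep2} for $K_n^\lambda$ and estimate directly via Minkowski, exactly as in the last step of the proof of Theorem \ref{Th_Sav-1}, picking up the term $2\mathrm{Re}\frac{K_n(\overline zt)}{(\overline zt)^n}-1-2\bigl|\frac{L_n(\overline zt)}{(\overline zt)^n}\bigr|\ge0$ in the weight.)

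For necessity of the third condition, assume \eqref{thm2 ineq} and that the low-order conditions have already been derived. Fix $z_0\in\mathbb D\setminus\{0\}$ and pick $\lambda_0\in\mathbb T$ so that $\lambda_0 L_n(z_0)/z_0^n=|L_n(z_0)/z_0^n|$; then by the equivalence above $\|(K_n+\lambda_0 L_n)*f\|_\infty\le E_n(f)_\infty$ for all $f$, so $K_n+\lambda_0 L_n$ is a BAP operator with the correct normalization, and Theorem \ref{Th_Sav-1} gives $\mathrm{Re}\dfrac{K_n(z_0)+\lambda_0 L_n(z_0)}{z_0^n}\ge\dfrac12$, i.e. $\mathrm{Re}\dfrac{K_n(z_0)}{z_0^n}-\dfrac12\ge -\mathrm{Re}\dfrac{\lambda_0 L_n(z_0)}{z_0^n}=-\left|\dfrac{L_n(z_0)}{z_0^n}\right|$ — wait, sign: with this choice of $\lambda_0$ we get $\mathrm{Re}\frac{K_n(z_0)}{z_0^n}+|\frac{L_n(z_0)}{z_0^n}|\ge\frac12$, which is weaker than desired, so instead choose $\lambda_0$ with $\lambda_0 L_n(z_0)/z_0^n=-|L_n(z_0)/z_0^n|$; then $\mathrm{Re}\frac{K_n(z_0)}{z_0^n}-|\frac{L_n(z_0)}{z_0^n}|\ge\frac12$, as required. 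Since $z_0$ is arbitrary, the third condition follows.

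I expect the main obstacle to be making the first step — the equivalence between \eqref{thm2 ineq} for $(K_n,L_n)$ and the family of BAP inequalities for $\{K_n+\lambda L_n:\lambda\in\mathbb T\}$ — fully rigorous. The forward direction of that equivalence is immediate from $\mathrm{Re}w\le|w|$, but the reverse direction requires the pointwise-in-$z$ optimization of $\lambda$, and one must check that the normalization hypotheses needed to invoke Theorem \ref{Th_Sav-1} are actually available at that stage; this is why I would extract the $O(z^{2n+1})$ conditions first, by a self-contained repetition of the extremal-function / $H^1_0$ argument from Theorem \ref{Th_Sav-1}, before bringing the rotation argument to bear. Everything after that is a routine combination of Theorem \ref{Th_Sav-1} with the triangle inequality and Minkowski's inequality.
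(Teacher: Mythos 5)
Your proposal is correct and follows essentially the same route as the paper: the paper likewise replaces \eqref{thm2 ineq} by the family of inequalities $\|(K_n+\mathrm e^{\mathrm i\alpha}L_n)*f\|_\infty\le E_n(f)_\infty$, $\alpha\in\mathbb R$, applies Theorem \ref{Th_Sav-1} to each kernel $K_n+\mathrm e^{\mathrm i\alpha}L_n$, and then chooses the rotation pointwise in $z$ (the paper takes $\alpha=-\arg L_n(z)+n\arg z+\pi$, exactly your choice of $\lambda_0$) to convert the real-part condition into the modulus condition. The quantifier interchange you worry about is harmless, and the low-order normalization you propose to extract first is obtained in the paper simply by reading off the $O(z^{2n+1})$ conditions from Theorem \ref{Th_Sav-1} applied to the rotated kernels.
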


Theorem \ref{Th_Sav-2} in case $n=0$ is due to Goluzin [5, pp. 519, 520].

\begin{proof} We observe that for given $z\in\mathbb D$,
\[
\left|T_n(f)(z)+\mathrm e^{\mathrm{i}\alpha}(L_n*f)(z)\right|\le\left|T_n(f)(z)\right|+\left|(L_n*f)(z)\right|,
\] 
for any $\alpha\in\mathbb R$. Equality holds here if and only if $\alpha=\arg T_n(z)-\arg (L_n*f)(z)$. Therefore, (\ref{thm2 ineq}) is equivalent to
\begin{equation}\label{sum ineq}
\max_{\alpha\in\mathbb R}\left\|T_n(f)+\mathrm e^{\mathrm{i}\alpha}(L_n*f)\right\|_\infty\le E_n(f)_\infty,~\forall f\in H^\infty.
\end{equation}

Now, consider the family of operators $\left\{T_{n,\alpha}\right\}_{\alpha\in\mathbb R}$, defined on $H^\infty$ by 
\begin{eqnarray*}
T_{n,\alpha}(f)&=&(K_n+\mathrm e^{\mathrm {i}\alpha}L_n)*f\\
&=&T_n(f)+\mathrm e^{\mathrm i\alpha}(L_n*f).
\end{eqnarray*}
Applying Theorem \ref{Th_Sav-1} to each $T_{n,\alpha}$, one can show that (\ref{sum ineq}) together with statement that $T_n$ is a BAP operator, is equivalent to the statements
that
\begin{eqnarray*}
\mathrm e^{\mathrm {i}\alpha}L_n(z)&=&z^n-K_n(z)+O(z^{2n+1})\\
&=&O(z^{2n+1}),
\end{eqnarray*}
as $z\to 0$, and
\[
\mathrm{Re}\frac{K_n(z)}{z^n}+\mathrm{Re}\left(\mathrm e^{\mathrm i\alpha}\frac{L_n(z)}{z^n}\right)\ge\frac{1}{2}
\]
for all $z\in\mathbb D$ and for all $\alpha\in\mathbb R$. To complete the proof, we take $\alpha=-\arg L_n(z)+n\arg z+\pi$.
\end{proof}

\section{Application}

The Cauchy inequality states that
\begin{equation}\label{Cauchy ineq}
\left|\widehat f_n\right|\le \|f\|_q,~\forall f\in H^q,
\end{equation}
where $1\le q\le\infty$.
Equality (for given $n$) here is  attained for the function $e_n$.
But for bounded holomorphic functions in $\mathbb D$ the following Landau inequality is stronger than (\ref{Cauchy ineq}) [7, p. 34]:  
\begin{equation}\label{Landau ineq}
\left|\widehat f_n\right|+\frac{1}{2}\left|\widehat f_N\right|\le \|f\|_\infty,~\forall f\in H^\infty,
\end{equation}
where $n, N\in\mathbb Z_+$, and $N\ge 2n+1$. Moreover, in [8] it was shown that the constant $\frac{1}{2}$ is sharp in the sense that
\begin{equation}\label{sharp}
\sup_{f\in H^\infty, \|f\|_\infty\le 1}\frac{\left|\widehat f_N\right|}{1-\left|\widehat f_n\right|}=2,~\forall N\ge 2n+1.
\end{equation}
Later on (Corollary \ref{main col}) we will give an alternate proof to (\ref{Landau ineq}) and (\ref{sharp}).

Applying (\ref{Cauchy ineq}) and (\ref{Landau ineq}) to the function $f-p$, where $p\in\mathcal P_{n-1}$, we obtain the following:
\begin{equation}\label{first ineq}
\left|\widehat f_n\right|\le E_n(f)_q,~\forall f\in H^q,
\end{equation}
\begin{equation}\label{second ineq}
\left|\widehat f_n\right|+\frac{1}{2}\left|\widehat f_N\right|\le E_n(f)_\infty,~\forall f\in H^\infty,
\end{equation}
where $1\le q\le\infty$, $n, N\in\mathbb Z_+$, and $N\ge 2n+1$.

The inequality (\ref{first ineq}) is sharp on whole space $H^q$ in the following sense: equality in (\ref{first ineq}) for given $n$, as was shown in [9], is attained if and only if
\[
\begin{cases}
\displaystyle f\in\mathcal P_{2n}\wedge\mathrm{Re}\sum_{k=0}^{n}\frac{\widehat f_{k+n}}{\widehat f_n}z^k\ge\frac{1}{2},~z\in\mathbb D,~&\mbox{if}~q=1,\\
f\in\mathcal P_n~&\mbox{if}~1<q\le\infty,
\end{cases}
\] 
provided $\left|\widehat f_n\right|>0$.

In this section we demonstrate the application of previous results to obtain some refinements of (\ref{second ineq}) for functions from $H^\infty\setminus\mathcal P_{2n}$.

The main tool in the section is the following:

\begin{theorem}\label{Th_Sav-3} Let $n\in\mathbb Z_+$, and let $L_n$ be a holomorphic function in $\mathbb D$ such that $L_n(z)=O(z^n)$ as $z\to 0$. Then
\begin{equation}\label{ineq Cor 2}
\left|\widehat f_n\right|+\left\|L_n*f\right\|_\infty\le E_n(f)_\infty,~\forall f\in H^\infty,
\end{equation}
if and only if $|L_n(z)|\le\frac{1}{2}|z|^{2n+1}$ for all $z\in\mathbb D$.
\end{theorem}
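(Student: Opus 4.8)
The plan is to derive Theorem~\ref{Th_Sav-3} from Theorem~\ref{Th_Sav-2} applied with the particular kernel $K_n=e_n$, i.e. $K_n(z)=z^n$. For this choice $(K_n*f)(z)=\widehat f_nz^n$, and $T_n=e_n*$ is trivially a BAP operator over $H^\infty$: indeed $e_n*e_n=e_n$, while $\|e_n*f\|_\infty=|\widehat f_n|\le E_n(f)_\infty$ by the Cauchy inequality (\ref{first ineq}). So everything reduces to comparing the hypothesis of Theorem~\ref{Th_Sav-3} with the inequality (\ref{thm2 ineq}) of Theorem~\ref{Th_Sav-2} for this kernel.

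\emph{Sufficiency.} Assume $|L_n(z)|\le\frac12|z|^{2n+1}$ on $\mathbb D$. First I would verify the three conditions of Theorem~\ref{Th_Sav-2} for the pair $(e_n,L_n)$. The relation $K_n(z)=z^n+O(z^{2n+1})$ is vacuous. The bound $|L_n(z)|\le\frac12|z|^{2n+1}$ forces the Taylor coefficients of $L_n$ to vanish up to order $2n$ (otherwise $|L_n(z)|/|z|^j$ would have a nonzero limit at $0$ for some $j\le 2n$, contradicting $|L_n(z)|/|z|^j\le\frac12|z|^{2n+1-j}\to 0$), so $L_n(z)=O(z^{2n+1})$. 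And $\mathrm{Re}(K_n(z)/z^n)-\frac12=\frac12\ge\frac12|z|^{n+1}\ge|L_n(z)|/|z|^n$ for $z\in\mathbb D$. Hence Theorem~\ref{Th_Sav-2} gives $|\widehat f_n|\,|z|^n+|(L_n*f)(z)|\le E_n(f)_\infty$ for every $z\in\mathbb D$ and every $f\in H^\infty$. Since $|z|^n$ is constant on each circle $|z|=\rho$, taking the maximum over that circle yields $\rho^n|\widehat f_n|+\max_{|z|=\rho}|(L_n*f)(z)|\le E_n(f)_\infty$; letting $\rho\to1^-$, and using that $\rho\mapsto\max_{|z|=\rho}|(L_n*f)(z)|$ is nondecreasing with supremum $\|L_n*f\|_\infty$, we arrive at (\ref{ineq Cor 2}).

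\emph{Necessity.} Assume (\ref{ineq Cor 2}) holds for all $f\in H^\infty$. Since $|\widehat f_n|\,|z|^n\le|\widehat f_n|$ and $|(L_n*f)(z)|\le\|L_n*f\|_\infty$ for every $z\in\mathbb D$, inequality (\ref{ineq Cor 2}) implies $\sup_{z\in\mathbb D}(|\widehat f_n|\,|z|^n+|(L_n*f)(z)|)\le E_n(f)_\infty$ for all $f\in H^\infty$, which is exactly (\ref{thm2 ineq}) with $K_n=e_n$. As $e_n*$ is a BAP operator, the necessity direction of Theorem~\ref{Th_Sav-2} now gives $L_n(z)=O(z^{2n+1})$ together with $\mathrm{Re}(e_n(z)/z^n)-\frac12=\frac12\ge|L_n(z)|/|z|^n$, i.e. $|L_n(z)|\le\frac12|z|^n$ on $\mathbb D$. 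This is still weaker than the claimed bound, so the final step is a bootstrap using the extra vanishing: set $h(z):=L_n(z)/z^{2n+1}$, which is holomorphic in $\mathbb D$ since $L_n(z)=O(z^{2n+1})$. From $|L_n(z)|\le\frac12|z|^n$ one gets $|h(z)|\le\frac12|z|^{-(n+1)}$; hence for every $z_0\in\mathbb D$ and every $\rho$ with $|z_0|\le\rho<1$ the maximum modulus principle gives $|h(z_0)|\le\max_{|z|=\rho}|h(z)|\le\frac12\rho^{-(n+1)}$, and letting $\rho\to1^-$ yields $|h(z_0)|\le\frac12$. Therefore $|L_n(z)|=|z|^{2n+1}|h(z)|\le\frac12|z|^{2n+1}$ for all $z\in\mathbb D$, completing the proof.

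I expect the only genuinely delicate point to be this last bootstrap in the necessity part: a direct appeal to Theorem~\ref{Th_Sav-2} with $K_n=e_n$ yields merely $|L_n(z)|\le\frac12|z|^n$, and the missing factor $|z|^{n+1}$ has to be recovered from the extra vanishing $L_n(z)=O(z^{2n+1})$ via the maximum modulus principle. The remaining ingredients — the behavior of Hadamard products under $K_n=e_n$, and the description of $\|\cdot\|_\infty$ as the increasing limit of circular maxima — are routine.
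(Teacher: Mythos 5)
Your proof is correct and follows essentially the same route as the paper: apply Theorem~\ref{Th_Sav-2} with $K_n=e_n$ (so $T_n(f)=\widehat f_n e_n$ is trivially BAP), identify $\sup_{z\in\mathbb D}\bigl(|\widehat f_n||z|^n+|(L_n*f)(z)|\bigr)$ with $|\widehat f_n|+\|L_n*f\|_\infty$, and then pass between the conditions $L_n(z)=O(z^{2n+1})$, $|L_n(z)/z^n|\le\tfrac12$ and the single bound $|L_n(z)|\le\tfrac12|z|^{2n+1}$. Your final ``bootstrap'' via the maximum modulus principle is exactly the Schwarz-lemma step the paper invokes, so the only difference is that you spell out details the paper leaves implicit.
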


\begin{proof} Taking $K_n(z)=z^n$, we get $T_n(f)(z)=(K_n*f)(z)=\widehat f_nz^n$. Therefore,
\[
\sup_{z\in\mathbb D}\left(\left|T_n(f)(z)\right|+\left|(L_n*f)(z)\right|\right)=\left|\widehat f_n\right|+\|L_n*f\|_\infty,
\]
and
\[
\mathrm{Re}\frac{K_n(z)}{z^n}-\frac{1}{2}=\frac{1}{2},~z\in\mathbb D
\]
Moreover, $T_n$ is a BAP operator.  Hence by Theorem \ref{Th_Sav-2}, (\ref{ineq Cor 2}) is equivalent to 
\[
\begin{cases}
L_n(z)=O\left(z^{2n+1}\right)~\mbox{as}~z\to 0,\\
\displaystyle\left|\frac{L_n(z)}{z^n}\right|\le\frac{1}{2}~\mbox{for all}~z\in\mathbb D.
\end{cases}
\]
By Schwarz lemma this is equivalent to $|L_n(z)|\le\frac{1}{2}|z|^{2n+1}$ for all $z\in\mathbb D$. 
\end{proof}

For $f\in H^1$ we set
\[
\mathcal E_{k}(f)_1:=
\begin{cases}
\displaystyle \inf_{h\in H^1_0}\|f-\overline h\|_1~&\mbox{if}~k=0,
\\
\displaystyle\inf_{p\in\mathcal P_{k-1}, h\in H^1_0}\|f-\left(p+\overline h\right)\|_1~&\mbox{if}~k\in\mathbb N.
\end{cases}
\] 

\begin{corollary}
If $n, N\in\mathbb Z_+$, $N\ge 2n+1$, and $f\in H^\infty$, then
\begin{equation}\label{ineq 3}
\left|\widehat f_n\right|+\frac{1}{2}\mathcal E_{N}(f)_1\le E_n(f)_\infty.
\end{equation}
The number $\frac{1}{2}$ cannot be improved.
\end{corollary}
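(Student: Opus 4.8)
The plan is to deduce the inequality \eqref{ineq 3} from Theorem \ref{Th_Sav-3} by choosing a suitable kernel $L_n$, and then to prove sharpness of the constant $\tfrac12$ by exhibiting a sequence of test functions. The natural candidate for the kernel is $L_n(z)=\tfrac12 z^N$ (or more precisely $\tfrac12\varepsilon^{N-2n-1}z^N$ to stay inside the unit disc, but since $N\ge 2n+1$ we may simply take $L_n(z)=\tfrac12 z^N$ directly): indeed $L_n(z)=O(z^n)$ as $z\to0$, and $|L_n(z)|=\tfrac12|z|^N\le\tfrac12|z|^{2n+1}$ for all $z\in\mathbb D$ precisely because $N\ge 2n+1$ and $|z|<1$. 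Hence Theorem \ref{Th_Sav-3} applies and gives
\[
\left|\widehat f_n\right|+\tfrac12\left\|e_N*f\right\|_\infty\le E_n(f)_\infty,\qquad\forall f\in H^\infty.
\]

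The second step is to identify $\left\|e_N*f\right\|_\infty$ with $\mathcal E_N(f)_1$. Since $(e_N*f)(z)=\widehat f_N z^N$, we have $\left\|e_N*f\right\|_\infty=\left|\widehat f_N\right|$, so what really has to be shown is $\left|\widehat f_N\right|=\mathcal E_N(f)_1$. This is a duality statement: the functional $g\mapsto\widehat g_N$ on $H^1$ has norm $1$ (attained at $e_N$), and its annihilator, when one writes $L^1(\mathbb T)=H^1\oplus\overline{H^1_0}$ and identifies the dual of $L^1/(\overline{H^1_0}+\mathcal P_{N-1})$ appropriately, consists exactly of functions in $H^\infty$ orthogonal to $\mathcal P_{N-1}$ and to $\overline{H^1_0}$ — i.e. of multiples of $e_N$ among the relevant test class. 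Concretely, for any $p\in\mathcal P_{N-1}$ and $h\in H^1_0$ one has $\widehat{(f-p-\overline h)}$ ... — more carefully, $\int_{\mathbb T} (f-p-\overline h)\,\overline{t}^{\,N}\,dm(t)=\widehat f_N$ since $\int_{\mathbb T}p\,\overline t^{\,N}dm=0$ (as $\deg p\le N-1$) and $\int_{\mathbb T}\overline h\,\overline t^{\,N}dm=\overline{\widehat h_{-N}}$... wait, one checks $\int_{\mathbb T}\overline{h(t)}\,\overline t^{\,N}dm(t)=\overline{\int_{\mathbb T}h(t)t^{N}dm(t)}=0$ because $h\in H^1_0$ has only nonnegative powers. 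Therefore $\left|\widehat f_N\right|\le\|f-p-\overline h\|_1$, and taking the infimum gives $\left|\widehat f_N\right|\le\mathcal E_N(f)_1$. The reverse inequality $\mathcal E_N(f)_1\le\left|\widehat f_N\right|$ will follow from a concrete near-best approximant: by the F. Riesz / duality argument (or by an explicit construction using an extremal outer function as in [6, p.\ 129]) there exist $p\in\mathcal P_{N-1}$ and $h\in H^1_0$ with $\|f-p-\overline h\|_1$ arbitrarily close to $\left|\widehat f_N\right|$. Combining the two inequalities, $\mathcal E_N(f)_1=\left|\widehat f_N\right|$, and \eqref{ineq 3} follows.

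For sharpness, note first that \eqref{ineq 3} is already sharp for the trivial reason that it refines \eqref{second ineq}: we have $\left|\widehat f_N\right|=\mathcal E_N(f)_1$ by the previous paragraph, so \eqref{ineq 3} is literally \eqref{second ineq}, whose constant $\tfrac12$ is sharp by \eqref{sharp}. More precisely, if the constant $\tfrac12$ in \eqref{ineq 3} could be replaced by some $c>\tfrac12$, then since $\mathcal E_N(f)_1=\left|\widehat f_N\right|$ we would get $\left|\widehat f_n\right|+c\left|\widehat f_N\right|\le E_n(f)_\infty$ for all $f\in H^\infty$, contradicting the sharpness statement \eqref{sharp} (applied to $f-p$ with $p\in\mathcal P_{n-1}$, which shows $\sup\frac{|\widehat f_N|}{E_n(f)_\infty - |\widehat f_n|}=2$, equivalently the best constant is exactly $\tfrac12$). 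Hence $\tfrac12$ cannot be improved.

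I expect the main obstacle to be the clean justification of $\mathcal E_N(f)_1=\left|\widehat f_N\right|$, specifically the reverse inequality $\mathcal E_N(f)_1\le\left|\widehat f_N\right|$. The easy direction is just orthogonality of $\overline t^{\,N}$ to $\mathcal P_{N-1}+\overline{H^1_0}$; the hard direction requires producing a genuinely good approximant of $f$ from $\mathcal P_{N-1}+\overline{H^1_0}$ whose error in $L^1$ norm is controlled by the single coefficient $\left|\widehat f_N\right|$. This is a standard $H^1$–$BMOA$ type duality fact, but it must be invoked carefully — either via the general duality theorem for $L^1/(\text{closed subspace})$ together with an identification of the annihilator, or via the explicit extremal-function machinery already cited from [6, p.\ 129] in the proof of Theorem \ref{Th_Sav-1}. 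Everything else — the choice $L_n=\tfrac12 e_N$, the verification of the hypothesis of Theorem \ref{Th_Sav-3}, and the transfer of sharpness from \eqref{sharp} — is routine.
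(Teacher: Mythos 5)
There is a genuine gap, and it is fatal to the main inequality: your claimed identity $\mathcal E_N(f)_1=\left|\widehat f_N\right|$ is false in general. Only the easy direction $\left|\widehat f_N\right|\le\mathcal E_N(f)_1$ holds (your orthogonality computation); the reverse inequality fails, because by duality $\mathcal E_N(f)_1=\sup\bigl\{\bigl|\int_\mathbb T f(t)\overline{t}^{\,N}\overline{G(t)}\,dm(t)\bigr| : G\in H^\infty,\ \|G\|_\infty\le 1\bigr\}$, and this supremum is taken over \emph{all} such $G$, not just $G\equiv 1$; it equals $\left|\widehat f_N\right|$ only for a restricted class of $f$. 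A concrete counterexample: for $f(z)=z^N+z^{N+1}$ and $G(z)=(z+a)/(1+az)$, $0<a<1$, the pairing gives $a+1-a^2>1=\left|\widehat f_N\right|$, so $\mathcal E_N(f)_1>\left|\widehat f_N\right|$. Indeed the paper's own remark following this corollary states that $\mathcal E_N(f)_1=\left|\widehat f_N\right|$ holds if and only if $f\in\mathcal R_N$, and that \eqref{ineq 3} is a genuine strengthening of \eqref{second ineq} off that class. Consequently, your argument with the single kernel $L_n=\tfrac12 e_N$ only proves the weaker inequality \eqref{second ineq}; it does not prove \eqref{ineq 3}.

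What the paper actually does (and what your proof is missing) is to apply Theorem \ref{Th_Sav-3} not to one kernel but to the whole family of kernels $L_n$ with $|L_n(z)|\le|z|^N\le|z|^{2n+1}$ (after extracting the factor $\tfrac12$), take the supremum of $\|L_n*f\|_\infty$ over this family, and then identify that supremum with $\mathcal E_N(f)_1$ by the duality relation $\sup_{g\in H^\infty_0,\ \|g\|_\infty\le1}\bigl|\int_\mathbb T\overline{g(t)}f(t)t^{-(N-1)}dm(t)\bigr|=\inf_{h\in H^1}\int_\mathbb T\bigl|f(t)-t^{N-1}\overline{h(t)}\bigr|dm(t)=\mathcal E_N(f)_1$. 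Your sharpness argument, by contrast, can be salvaged: it only uses the true direction $\mathcal E_N(f)_1\ge\left|\widehat f_N\right|$ together with the cited sharpness \eqref{sharp} (the paper instead argues intrinsically, via the extremal element $\widetilde g$ of the duality relation and Theorem \ref{Th_Sav-3}, which keeps the proof self-contained since \eqref{sharp} is reproved only later in Corollary \ref{main col}). But without the supremum-over-kernels step and the duality identification, inequality \eqref{ineq 3} itself remains unproved.
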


\begin{proof}
It follows from (\ref{ineq Cor 2}) that
\begin{equation}\label{ineq 4}
\left|\widehat f_n\right|+\frac{1}{2}\sup_{L_n}\|L_n*f\|_\infty\le E_n(f)_\infty,
\end{equation}
where supremum is over  all functions $L_n$ holomorphic in $\mathbb D$ such that $|L_n(z)\le|z|^{N}$, $z\in\mathbb D$. Since $f\in H^\infty$ and $L_N/e_{N-1}\in H^\infty_0$, it follows that convolution
\[
\left(L_n*f\right)(z)=z^{N-1}\int_\mathbb T\frac{L_n(\overline tz)}{(\overline tz)^{N-1}}\frac{f(t)}{t^{N-1}}dm(t),~z\in\mathbb D,
\]
is continuous on the closed disc $\overline{\mathbb D}$ (see [10, pp. 37, 38]). Therefore, 
by the basic duality relation [6, ch. IV], we get
\begin{eqnarray}
\sup_{L_n}\left\|L_n*f\right\|_\infty&=&\sup_{g\in H^\infty_0, \|g\|_\infty\le 1}\left|\int_\mathbb T\overline{g(t)}\frac{f(t)}{t^{N-1}}dm(t)\right|\nonumber\\
&=&\inf_{h\in H^1}\int_\mathbb T\left|f(t)t^{-(N-1)}-\overline{h(t)}\right|dm(t)\nonumber\\
&=&\inf_{h\in H^1}\int_\mathbb T\left|f(t)-t^{N-1}\overline{h(t)}\right|dm(t)\nonumber\\
&=&\mathcal E_N(f)_1.\label{ineq 6}
\end{eqnarray}
Here we notice that for all $h\in H^1$, $t^{N-1}\overline{h(t)}=\sum_{k=0}^{N-1}\overline{h_{N-1-k}}t^k+\overline{h_1(t)}$, where $h_1\in H^1_0$.
Substituting (\ref{ineq 6}) in (\ref{ineq 4}), we obtain (\ref{ineq 3}).

Now, suppose that there exist number $c>\frac{1}{2}$ such that
\begin{equation}\label{ineq 7}
\left|\widehat f_n\right|+c\mathcal E_{N}(f)_1\le E_n(f)_\infty.
\end{equation}
Then by the theorem about existence and uniqueness of extremal elements in the duality relation [6, p. 129], there exists a unique function $\widetilde g\in H^\infty_0$ with $\|\widetilde g\|_\infty=1$ that realize the second supremum in (\ref{ineq 6}). Hence, according to (\ref{ineq 7}) and (\ref{ineq 6}), for the function $\widetilde L_n=c\widetilde ge_{N-1}$ the inequality (\ref{ineq Cor 2}) holds. By Theorem \ref{Th_Sav-3} this is equivalent to 
\[
\left|\widetilde L_n(z)\right|=c\left|\widetilde g(z)z^{N-1}\right|\le\frac{1}{2}\left|z\right|^{2n+1}
\]
for all $z\in\mathbb D$. This implies $\|\widetilde g\|_\infty\le\frac{1}{2c}<1$, a contradiction.
\end{proof}

\begin{remark}
Let $\mathcal R_N$ be a set of all functions $f$ holomorphic in $\mathbb D$ for which $\left|\widehat f_N\right|>0$ and 
\[
\mathrm{Re}\frac{1}{\widehat f_N}\sum_{k=0}^{\infty}\widehat f_{k+N}z^k\ge\frac{1}{2}
\]
for all $z\in\mathbb D$. Clearly, $\mathcal E_N(f)_1\ge\left|\widehat f_N\right|$, and, as was shown in [11], $\mathcal E_N(f)_1=\left|\widehat f_N\right|$ if and only if $f\in\mathcal R_N$. Therefore (\ref{ineq 3}) is a strengthening of (\ref{second ineq}) on the functional class $H^\infty\setminus\mathcal R_N$.
\end{remark}

The following assertion shows that the conditions for validity of Landau's inequality (\ref{Landau ineq}) as well as (\ref{second ineq})  are final.

\begin{corollary}\label{main col}
Let $c>0$, $n, N\in\mathbb Z_+$, and  $n<N$. In order that 
\begin{equation}\label{eq1}
\left|\widehat f_n\right|+c\left|\widehat f_N\right|\le E_n(f)_\infty,~\forall f\in H^\infty,
\end{equation}
it is necessary and sufficient that $N\ge 2n+1$ and that $c\le \frac{1}{2}$.

Moreover, for $N\ge 2n+1$,
\begin{equation}\label{eq2}
\sup_{f\in H^\infty\setminus\mathcal P_{n-1}}\frac{\left|\widehat f_n\right|+\frac{1}{2}\left|\widehat f_N\right|}{E_n(f)_\infty}=1.
\end{equation}
\end{corollary}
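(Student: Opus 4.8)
The plan is to derive Corollary \ref{main col} as a fairly direct consequence of Theorem \ref{Th_Sav-3} (for the case $c=1/2$) together with elementary scaling and a test-function argument (to rule out $c>1/2$ and $N\le 2n$).

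\textbf{Sufficiency.} Suppose $N\ge 2n+1$ and $0<c\le 1/2$. Take $L_n(z):=c\,z^N$. Then $L_n(z)=O(z^n)$ as $z\to 0$ and $|L_n(z)|=c|z|^N\le\frac12|z|^N\le\frac12|z|^{2n+1}$ for all $z\in\mathbb D$ (using $N\ge 2n+1$ and $|z|<1$). By Theorem \ref{Th_Sav-3}, inequality (\ref{ineq Cor 2}) holds for this $L_n$; but $(L_n*f)(z)=c\,\widehat f_N z^N$, so $\|L_n*f\|_\infty=c|\widehat f_N|$, and (\ref{ineq Cor 2}) is exactly (\ref{eq1}).

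\textbf{Necessity.} Assume (\ref{eq1}) holds. Put $L_n(z):=c\,z^N$ again; then $(L_n*f)(z)=c\widehat f_N z^N$, so $\|L_n*f\|_\infty=c|\widehat f_N|$ and (\ref{eq1}) says precisely that $|\widehat f_n|+\|L_n*f\|_\infty\le E_n(f)_\infty$ for all $f\in H^\infty$. Now $L_n(z)=O(z^n)$ as $z\to 0$ requires $N\ge n$, which is given; and Theorem \ref{Th_Sav-3} forces $|L_n(z)|=c|z|^N\le\frac12|z|^{2n+1}$ for all $z\in\mathbb D$. Letting $|z|\to 1^-$ gives $c\le\frac12$. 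For the constraint on $N$: if $N\le 2n$, then $c|z|^N\le\frac12|z|^{2n+1}$ for all $|z|<1$ would force (dividing by $|z|^N$ and letting $|z|\to 0$) $c\le\frac12|z|^{2n+1-N}\to 0$, impossible for $c>0$; hence $N\ge 2n+1$. (Equivalently, one can apply Theorem \ref{Th_Sav-3}'s $O(z^{2n+1})$ requirement directly: $L_n(z)=cz^N=O(z^{2n+1})$ forces $N\ge 2n+1$.)

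\textbf{Sharpness (\ref{eq2}).} Here $N\ge 2n+1$ and $c=1/2$. The lower bound $\ge 1$ already follows (\ref{eq2}) being $\le 1$ from the inequality (\ref{eq1}) just proved; it remains to produce functions making the ratio approach $1$. I would use the kernel-type extremal functions: for $\rho\in(0,1)$ consider $f_\rho(z)=\dfrac{z^n}{1-\rho\,z^{N-n}}=\sum_{j\ge 0}\rho^j z^{n+j(N-n)}$, so that $\widehat f_n=1$, $\widehat f_N=\rho$ (as $N-n\ge n+1\ge 1$, the exponent $N$ occurs exactly once, at $j=1$), and all Taylor coefficients of order $<n$ vanish. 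Then $|\widehat f_n|+\frac12|\widehat f_N|=1+\frac\rho2$, while $E_n(f_\rho)_\infty\le\|f_\rho\|_\infty\le\frac{1}{1-\rho}$... this crude bound is too weak, so instead I would invoke the BAP machinery: by Theorem \ref{Th_Sav-1} the operator with kernel $K_n(z)=\dfrac{z^n}{1-z^{N-n}}$ is a BAP operator over $H^\infty$ provided $\mathrm{Re}\frac{1}{1-z^{N-n}}\ge\frac12$ on $\mathbb D$ (true) and the gap condition $N-n\ge n+1$ holds (true since $N\ge 2n+1$); applying $\|K_n*f\|_\infty\le E_n(f)_\infty$ to a function on which $K_n*$ acts nearly isometrically — e.g. $f(z)=\frac{z^n}{1-\rho z}$ or a finite Blaschke-type product — and tracking the two Taylor coefficients against $E_n$, one lets $\rho\to 1^-$. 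The cleanest choice is to test against functions of the form $f(z)=z^n b(z)$ with $b$ a Blaschke product of high enough degree so that $E_n(f)_\infty=\|f\|_\infty=1$ while $|\widehat f_n|+\frac12|\widehat f_N|\to 1$; alternatively, reuse the extremal configuration behind (\ref{sharp}) from [8] applied to $f$ with vanishing low-order coefficients.

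\textbf{Main obstacle.} The necessity and sufficiency for (\ref{eq1}) are essentially immediate once Theorem \ref{Th_Sav-3} is in hand — the only subtlety is correctly reading off that $L_n(z)=cz^N$ is the right test kernel and that its $O(z^{2n+1})$ requirement encodes $N\ge 2n+1$. The genuinely delicate point is the sharpness statement (\ref{eq2}): one must exhibit an explicit family $\{f_\rho\}\subset H^\infty\setminus\mathcal P_{n-1}$ for which $E_n(f_\rho)_\infty$ can be computed or tightly bounded from above (not merely $\le\|f_\rho\|_\infty$) while $|\widehat f_n|+\frac12|\widehat f_N|$ is controlled from below, so that the ratio tends to $1$. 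I expect this to hinge on choosing $f_\rho$ so that the best approximant in $\mathcal P_{n-1}$ is $0$ (automatic if the low-order coefficients vanish and $\|f_\rho\|_\infty$ is attained suitably) combined with a direct estimate of $\|f_\rho\|_\infty$, and this bookkeeping — rather than any conceptual difficulty — is where the real work lies.
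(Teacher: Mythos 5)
Your treatment of the equivalence (\ref{eq1}) $\Leftrightarrow$ ($N\ge 2n+1$ and $c\le\frac12$) is correct and is exactly the paper's argument: set $L_n(z)=cz^N$, note $\|L_n*f\|_\infty=c|\widehat f_N|$, and read both directions off Theorem \ref{Th_Sav-3}, with $|z|\to 1^-$ giving $c\le\frac12$ and the order condition at $z=0$ giving $N\ge 2n+1$.

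The sharpness statement (\ref{eq2}), however, is left as a sketch, and this is a genuine gap: you correctly sense that your first test function $z^n/(1-\rho z^{N-n})$ fails because its norm blows up, and you gesture at ``$z^n$ times a Blaschke product'' and at invoking the BAP operator with kernel $z^n/(1-z^{N-n})$, but you never produce a family for which the ratio is actually computed. No BAP machinery is needed, and you do not need the best approximant to be zero (i.e.\ $E_n(f)_\infty=\|f\|_\infty$); an upper bound $E_n(f)_\infty\le\|f\|_\infty$ suffices provided $\|f\|_\infty=1$. The paper's choice is
\[
f_\rho(z)=z^n\,\frac{z^{N-n}-\rho}{1-\rho z^{N-n}},\qquad 0\le\rho<1,
\]
which is $z^n$ times a M\"obius factor in $z^{N-n}$, hence $\|f_\rho\|_\infty=1$ and $E_n(f_\rho)_\infty\le 1$. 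Expanding, $f_\rho(z)=-\rho z^n+(1-\rho^2)\sum_{j\ge1}\rho^{\,j-1}z^{\,n+j(N-n)}$, and since $N-n\ge n+1$ the exponent $N$ occurs only at $j=1$, so $\widehat{(f_\rho)}_n=-\rho$ and $\widehat{(f_\rho)}_N=1-\rho^2$. Combining with the already proven inequality (\ref{eq1}) for $c=\frac12$,
\[
1\ \ge\ \frac{\bigl|\widehat{(f_\rho)}_n\bigr|+\frac12\bigl|\widehat{(f_\rho)}_N\bigr|}{E_n(f_\rho)_\infty}\ \ge\ \rho+\frac12\left(1-\rho^2\right)\ \longrightarrow\ 1\quad(\rho\to1^-),
\]
which is the entire proof of (\ref{eq2}). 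Your proposal stops short of this construction and computation, so as written the second half of the corollary is not established.
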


\begin{proof}
Taking $L_n(z)=cz^{N}$, we obtain
$c\left|\widehat f_N\right|=\|L_n*f\|_\infty.$
Hence, by Theorem \ref{Th_Sav-3}, (\ref{eq1}) is equivalent to $|L_n(z)|=c|z|^N\le \frac{1}{2}|z|^{2n+1}$ for all $z\in\mathbb D$. This is only if $N-(2n+1)\ge 0$ and $c\le\frac{1}{2}$

To prove (\ref{eq2}), we consider the sequence of functions $\left\{f_\rho\right\}_{0\le\rho<1}$, where
\[
f_\rho(z)=z^n\frac{z^{N-n}-\rho}{1-z^{{N-n}}\rho}.
\]
Clearly, $1=\|f_\rho\|_\infty\ge E_n\left(f_\rho\right)_\infty$, $\widehat{\left(f_\rho\right)_n}=-\rho$ and $\widehat{\left(f_\rho\right)_N}=1-\rho^2$. Therefore we obtain
\begin{eqnarray*}
1&\ge&\sup_{f\in H^\infty}\frac{\left|\widehat f_n\right|+\frac{1}{2}\left|\widehat f_N\right|}{E_n(f)_\infty}\\
&\ge&\frac{\left|\widehat{\left( f_\rho\right)_n}\right|+\frac{1}{2}\left|\widehat{\left(f_\rho\right)_N}\right|}{E_n\left(f_\rho\right)_\infty}\\
&\ge&\rho+\frac{1}{2}\left(1-\rho^2\right).
\end{eqnarray*} 	
The result follows on letting $\rho\to 1-$.
\end{proof}

\begin{corollary}\label{col 3} Let $n\in\mathbb Z_+$ and let $\{\psi_k\}$ be sequence of non-negative numbers such that 
\begin{equation}\label{bohr con}
\sum_{k=2n+1}^{\infty}\psi_k\le\frac{1}{2}.
\end{equation} Then 
\begin{equation}\label{ineq 8}
\left|\widehat f_n\right|+\sum_{k=2n+1}^{\infty}\left|\widehat f_k\right|\psi_k\le E_n(f)_\infty,~\forall f\in H^\infty.
\end{equation}
The number $\frac{1}{2}$ in (\ref{bohr con}) cannot be increased.
\end{corollary}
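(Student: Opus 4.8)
The plan is to derive (\ref{ineq 8}) from Theorem~\ref{Th_Sav-3} by exhibiting, for each fixed $f\in H^\infty$, a single holomorphic function $L_n$ that is admissible for that theorem and whose Hadamard product with $f$ recovers precisely the weighted tail $\sum_{k\ge 2n+1}\psi_k|\widehat f_k|$. So I would fix $f\in H^\infty$ and define $L_n$ through its Taylor coefficients by $\widehat{(L_n)}_k:=0$ for $k\le 2n$, and for $k\ge 2n+1$ by $\widehat{(L_n)}_k:=\psi_k\overline{\widehat f_k}/|\widehat f_k|$ when $\widehat f_k\ne0$ and $\widehat{(L_n)}_k:=0$ otherwise. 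Since $\sum_{k\ge 2n+1}\psi_k\le\frac12<\infty$, the series $\sum_k\widehat{(L_n)}_kz^k$ converges absolutely on $\mathbb D$, so $L_n$ is holomorphic there, $L_n(z)=O(z^{2n+1})$ as $z\to0$, and
\[
|L_n(z)|\le\sum_{k\ge 2n+1}\psi_k|z|^k\le|z|^{2n+1}\sum_{k\ge 2n+1}\psi_k\le\tfrac12|z|^{2n+1},\qquad z\in\mathbb D .
\]
Hence $L_n$ meets the hypothesis of Theorem~\ref{Th_Sav-3}, and that theorem yields $|\widehat f_n|+\|L_n*f\|_\infty\le E_n(f)_\infty$ for this $f$.

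Next I would bound $\|L_n*f\|_\infty$ from below. By the choice of the phases, $\widehat f_k\widehat{(L_n)}_k=\psi_k|\widehat f_k|\ge0$ for every $k$, so for $0\le r<1$
\[
(L_n*f)(r)=\sum_{k\ge 2n+1}\psi_k|\widehat f_k|r^k
\]
is non-negative and non-decreasing in $r$; letting $r\to1-$ (the limit being finite since $|\widehat f_k|\le\|f\|_\infty$ and $\sum\psi_k\le\frac12$) gives, by the definition of $\|\cdot\|_\infty$ and monotone convergence,
\[
\|L_n*f\|_\infty\ge\lim_{r\to1-}(L_n*f)(r)=\sum_{k\ge 2n+1}\psi_k|\widehat f_k| .
\]
Combining the two displayed estimates gives $|\widehat f_n|+\sum_{k\ge 2n+1}\psi_k|\widehat f_k|\le E_n(f)_\infty$, i.e.\ (\ref{ineq 8}); since $f$ was arbitrary, this proves the inequality for all $f\in H^\infty$.

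For the sharpness of the constant in (\ref{bohr con}), I would test with a degenerate weight: fix $N\ge 2n+1$, put $\psi_N:=c$ and $\psi_k:=0$ for $k\ne N$, so that $\sum_{k\ge 2n+1}\psi_k=c$. If (\ref{ineq 8}) held for every non-negative sequence satisfying $\sum_{k\ge 2n+1}\psi_k\le c$ with some $c>\frac12$, it would in particular give $|\widehat f_n|+c|\widehat f_N|\le E_n(f)_\infty$ for all $f\in H^\infty$, contradicting the necessity part of Corollary~\ref{main col}, which forces $c\le\frac12$. Therefore $\frac12$ in (\ref{bohr con}) cannot be increased.

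The one step that genuinely needs care is the lower bound $\|L_n*f\|_\infty\ge\sum_{k\ge 2n+1}\psi_k|\widehat f_k|$: the point is that $L_n$ must be allowed to depend on $f$ (which is legitimate, since Theorem~\ref{Th_Sav-3} is quantified over all functions for each fixed admissible kernel) and that the phases of $\widehat{(L_n)}_k$ should be aligned so that $(L_n*f)(r)$ has non-negative coefficients, making the passage $r\to1-$ harmless. Everything else — that $L_n$ is holomorphic, vanishes to order $2n+1$, and satisfies $|L_n(z)|\le\frac12|z|^{2n+1}$ — is a routine verification of the hypotheses of Theorem~\ref{Th_Sav-3}.
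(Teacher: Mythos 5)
Your proof of inequality (\ref{ineq 8}) is correct and essentially the same as the paper's: for fixed $f$ you build the kernel $L_n$ with coefficients $\psi_k$ times a unimodular phase aligned with $\widehat f_k$, check $|L_n(z)|\le\frac12|z|^{2n+1}$, and invoke Theorem \ref{Th_Sav-3}; your choice $\psi_k\overline{\widehat f_k}/|\widehat f_k|$ (conjugate phase, with the $\widehat f_k=0$ case set to zero) is in fact the cleaner way to make $(L_n*f)$ have non-negative coefficients, and your monotone-convergence passage to $r\to1^-$ justifies the identification $\|L_n*f\|_\infty\ge\sum_{k\ge 2n+1}\psi_k|\widehat f_k|$ that the paper states as an equality. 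Where you genuinely diverge is the sharpness claim. You take the degenerate sequence $\psi_N=c$, $\psi_k=0$ for $k\ne N$, and appeal to the necessity part of Corollary \ref{main col}; this is logically sound and non-circular (Corollary \ref{main col} is established before Corollary \ref{col 3} and rests only on Theorem \ref{Th_Sav-3}), and it does prove the literal statement that the constant $\frac12$ in (\ref{bohr con}) cannot be increased. The paper instead argues per sequence: assuming (\ref{ineq 8}) holds for some fixed sequence with $\frac12<\sum_{k\ge 2n+1}\psi_k<\infty$, it picks $\rho_0$ with $\sum\psi_k\rho_0^k=\frac12$ and tests against the explicit function $z^n\bigl(z^{n+1}-\rho_0\bigr)/\bigl(1-z^{n+1}\rho_0\bigr)$, reaching the contradiction $1+\rho_0\le 2\rho_0^{2n+1}$. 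The paper's route thus yields the stronger conclusion that \emph{every individual} non-negative sequence whose tail sum exceeds $\frac12$ violates (\ref{ineq 8}) for some $f\in H^\infty$, whereas your reduction only exhibits one failing sequence for each $c>\frac12$; in exchange, your argument is shorter and avoids the explicit extremal construction.
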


\begin{proof} Fix $f\in H^\infty$ and consider the function $L_n(z)=\sum_{k=2n+1}^{\infty}\psi_k\mathrm{e}^{\mathrm i\arg\widehat f_k}z^k$. We have 
\[
\sum_{k=2n+1}^{\infty}\left|\widehat f_k\right|\psi_k=\left\|L_n*f\right\|_\infty.
\]
Since 
\begin{eqnarray*}
|L_n(z)|&\le&|z|^{2n+1}\sum_{k=2n+1}^{\infty}\psi_k\\
&\le&\frac{1}{2}|z|^{2n+1},~ z\in\mathbb D,
\end{eqnarray*}
(\ref{ineq 8}) follows by Theorem \ref{Th_Sav-3}.

Let us now prove that restriction (\ref{bohr con}) cannot be weakened. Suppose that (\ref{ineq 8}) holds with $\frac{1}{2}<\sum_{k=2n+1}^{\infty}\psi_k<+\infty$. Since the function $\rho\mapsto\sum_{k=2n+1}^{\infty}\psi_k\rho^k$ is continuous and increasing on $[0,1]$, there exists a unique number $\rho_0\in(0,1)$ such that $\sum_{k=2n+1}^{\infty}\psi_k\rho^k_0=\frac{1}{2}$.
Therefore for the holomorphic  function
\[
f(z)=z^n\frac{z^{n+1}-\rho_0}{1-z^{n+1}\rho_0}=-\rho_0z^n+\frac{1-\rho^2_0}{\rho^{2n+1}_0}\sum_{k=2n+1}^{\infty}\rho^k_0z^k
\]
we have
\begin{eqnarray*}
1&\ge&E_n(f)_\infty\\
&\ge&\left|f_n\right|+\sum_{k=2n+1}^{\infty}\left|\widehat f_k\right|\psi_k\\
&=&\rho_0+\frac{1-\rho^2_0}{2\rho^{2n+1}_0}
\end{eqnarray*}
or, equivalently,
\[
1+\rho_0\le 2\rho^{2n+1}_0.
\]
On the other side,
\[
2\rho^{2n+1}_0\le\rho^{2n+1}_0+\rho^{2n+1}_0\le 1+\rho_0.
\]
Hence, only $\rho_0=1$, a contradiction.
\end{proof}

For example, if $n=0$ and if $\psi_k=\rho^k$, where $0<\rho<1$, the Corollary \ref{col 3} coincide with the famous Bohr's theorem. Indeed, the condition (\ref{bohr con}) take a form
\[
\frac{\rho}{1-\rho}\le\frac{1}{2}\Leftrightarrow\rho\le\frac{1}{3},
\]
and (\ref{bohr con}) becomes 
\[
\sum_{k=0}^{\infty}\left|\widehat f_k\right|\rho^k\le\|f\|_\infty,~\forall f\in H^\infty.
\]

\bigskip

\textbf{Acknowledgements.} This research was supported by the Kyrgyz-Turkish Manas University (Bishkek/Kyrgyz Republic), Project No. KTMU-BAP-20I9.FBE,06
\smallskip


\begin{thebibliography}{99}
\bibitem{1} T. Sheil-Small, \textit{On the convolution of analytic functions}. J. Reine Angew. Math. \textbf{258} (1973), 137--152.

\bibitem{2} S. Ruscheweih, \textit{A note on
continuous sums of ridge functions} J. Approx. Theory \textbf{237} (2019),
210--221.

\bibitem{3} V.V. Savchuk,
S.O. Chaichenko, 
\textit{Supplement to F. Wiener sieve theorem}. (Ukrainian. English summary), Zb. Pr. Inst. Mat. NAN Ukr. \textbf{12} (2015), no. 4, 262--272.

\bibitem{4} F.G. Abdullayev, V.V. Savchuk, D. Şimşek, \textit{Comparison of the best approximation of holomorphic functions from Hardy space}. J. Nonlinear Sci. Appl. \textbf{12} (2019), no. 7, 412--419.

\bibitem{5} G.M. Goluzin, \textit{Geometric theory of functions of a complex variable}. Translations of
Mathematical Monographs, Vol.26 AMS, Providence, R.I., 1969.

\bibitem{6} J.B. Garnett, \textit{Bounded analytic functions. Revised first edition}. Graduate Texts in Mathematics, 236. Springer, New York, 2007.

\bibitem{7} E. Landau, D. Gaier, \textit{Darstellung und Begrundung einiger neuerer Ergebnisse der
Funktionentheorie}. Springer, Berlin, 1986.

\bibitem{8}
I.Y. Meremelya, V.V. Savchuk, \textit{Exact Constants in Inequalities for the Taylor Coefficients of Bounded Holomorphic Functions in a Polydisk}. Ukr. Math. J. \textbf{67} (2016), 1913--1921.

\bibitem{9} F.G. Abdullayev, G.A. Abdullayev, V.V. Savchuk, \textit{Best approximation of holomorphic functions from Hardy space in terms of Taylor coefficients}. Filomat \textbf{33} (2019), no. 5, 1417--1424.

\bibitem{10} A. Zygmund,
\textit{Trigonometric series}: Vols. I,II.
Second edition, reprinted with corrections and some additions Cambridge University Press, London-New York 1968 Vol. I.

\bibitem{11} V.V. Savchuk, \textit{Best approximations by holomorphic functions. Applications to best polynomial approximations of classes of holomorphic functions}. 
Ukr. Math. J. {\bf 59} (2007), no. 8, 1163--1183.
\end{thebibliography}
\end{document}